\documentclass[reqno,12pt]{amsart}
\usepackage{amsmath,amsfonts,amsthm,amssymb,indentfirst}
\usepackage[all,poly]{xy} 
\usepackage{color}
\usepackage[pagebackref,colorlinks]{hyperref}
\usepackage{kantlipsum}
\usepackage[all,poly]{xy}
\usepackage{mathrsfs}
\usepackage{enumerate}

\parindent=15pt
\parskip=3pt
\setlength{\textwidth}{7in} 
\setlength{\oddsidemargin}{-14pt}
\setlength{\evensidemargin}{-14pt}
\setlength{\textheight}{9in}
\setlength{\topmargin}{0in}

\theoremstyle{plain}
\newtheorem{lemma}{Lemma}[section]

\newtheorem{proposition}[lemma]{Proposition}

\theoremstyle{definition}
\newtheorem{remark}[lemma]{Remark}
\newtheorem{example}[lemma]{Example}

\newcommand{\Zset}{\mathbb Z}

\newcommand{\M}{\operatorname{\mathbb M}}
\newcommand{\gr}{\operatorname{gr}}

\newcommand{\so}{\mathbf{s}}
\newcommand{\ra}{\mathbf{r}}

\title[Realization of graded matrix algebras as Leavitt path algebras]{Realization of graded matrix algebras as Leavitt path algebras}

\author{Lia Va\v s}
\address{Department of Mathematics, Physics and Statistics, University of the Sciences, Philadelphia, PA 19104, USA}
\email{l.vas@usciences.edu}

\subjclass[2000]{16W50, 16S50, 16D70} 

\keywords{Graded matrix algebra, Leavitt path algebra}

\begin{document}

\begin{abstract} 
While every matrix algebra over a field $K$ can be realized as a Leavitt path algebra, this is not the case for every {\em graded} matrix algebra over a graded field. We provide a complete description of graded matrix algebras over a field, trivially graded by the ring of integers, which are graded isomorphic to Leavitt path algebras. As a consequence, we show that there are graded corners of Leavitt path algebras which are not graded isomorphic to Leavitt path algebras. This contrasts a recent result stating that every corner of a Leavitt path algebra of a finite graph is isomorphic to a Leavitt path algebra. If $R$ is a finite direct sum of graded matricial algebras over a trivially graded field and over naturally graded fields of Laurent polynomials, we also present conditions under which $R$ can be realized as a Leavitt path algebra.  
\end{abstract}

\maketitle

\section{Introduction}

Every matrix algebra over a field $K$ or the ring $K[x, x^{-1}]$ is isomorphic to a Leavitt path algebra. In contrast, not every {\em graded} matrix algebra over a field is graded isomorphic to a Leavitt path algebra by \cite[Proposition 3.7]{Lia_no-exit}. Here, a Leavitt path algebra is considered with the natural grading by the ring of integers $\Zset$ and the field $K$ is considered to be trivially $\Zset$-graded. The Leavitt Path Algebra Realization Question of \cite[Section 3.3]{Lia_no-exit} asks for a characterization of those graded matrix algebras over $K$ which can be realized as Leavitt path algebras. In Proposition \ref{matrix_rep_of_sinks}, we answer this question by providing a complete characterization of graded matrix algebras over $K$ which are graded isomorphic to Leavitt path algebras. In Proposition \ref{matrix_rep_of_comets}, we provide analogous characterization for graded matrix algebras over naturally $\Zset$-graded $K[x^m, x^{-m}]$ for any positive integer $m.$ These two results are used in Proposition \ref{matrix_rep_of_no-exits} which presents conditions under which a finite direct sum of graded matricial algebras over $K$ and $K[x^m, x^{-m}]$ can be realized by a Leavitt path algebra.

As a consequence of Proposition \ref{matrix_rep_of_sinks}, we show that there are graded corners of Leavitt path algebras which are not graded isomorphic to Leavitt path algebras (Example \ref{example_on_corner_which_is_not_a_LPA}). This contrasts a recent result from \cite{Gene_Nam} which states that every corner of a Leavitt path algebra of a finite graph is isomorphic to another Leavitt path algebra. 

\section{Prerequisites}

A ring $R$ is graded by a group $\Gamma$ if $R=\bigoplus_{\gamma\in\Gamma} R_\gamma$ for additive subgroups $R_\gamma$ and $R_\gamma R_\delta\subseteq R_{\gamma\delta}$ for all $\gamma,\delta\in\Gamma.$ The elements of the set $H=\bigcup_{\gamma\in\Gamma} R_\gamma$ are said to be homogeneous. The grading is trivial if $R_\gamma=0$ for every nonidentity $\gamma\in \Gamma.$ A graded ring $R$ is a graded division ring if every nonzero homogeneous element has a multiplicative inverse. If a graded division ring $R$ is commutative then $R$ is a graded field. 

We adopt the standard definitions of graded ring homomorphisms and isomorphisms, graded left and right $R$-modules, graded module homomorphisms, and graded algebras as defined in \cite{NvO_book} and \cite{Roozbeh_book}. We use $\cong_{\gr}$ to denote a graded ring isomorphism. 

In \cite{Roozbeh_book}, for a $\Gamma$-graded ring $R$ and $\gamma_1,\dots,\gamma_n\in \Gamma$, $\M_n(R)(\gamma_1,\dots,\gamma_n)$ denotes the ring of matrices $\M_n(R)$ with the $\Gamma$-grading given by  
\begin{center}
$(r_{ij})\in\M_n(R)(\gamma_1,\dots,\gamma_n)_\delta\;\;$ if $\;\;r_{ij}\in R_{\gamma_i^{-1}\delta\gamma_j}$ for $i,j=1,\ldots, n.$ 
\end{center}
The definition of $\M_n(R)(\gamma_1,\dots,\gamma_n)$ in \cite{NvO_book} is different: $\M_n(R)(\gamma_1,\dots,\gamma_n)$ in \cite{NvO_book} corresponds to $\M_n(R)(\gamma_1^{-1},\dots,\gamma_n^{-1})$ in \cite{Roozbeh_book}. More details on the relations between the two definitions
can be found in \cite[Section 1]{Lia_realization}. Although the definition 
from \cite{NvO_book} has been in circulation longer, some matricial representations of Leavitt path algebras involve positive integers instead of negative integers making the definition from \cite{Roozbeh_book} more convenient when working with Leavitt path algebras. Because of this, we opt to use the definition from \cite{Roozbeh_book}. With this definition, if $F$ is the graded free right module $(\gamma_1^{-1})R\oplus \dots \oplus (\gamma_n^{-1})R,$\footnote{
If $M$ is a graded right $R$-module and $\gamma\in\Gamma,$ the $\gamma$-\emph{shifted or $\gamma$-suspended} graded right $R$-module $(\gamma)M$ is defined as the module $M$ with the $\Gamma$-grading given by \[(\gamma)M_\delta = M_{\gamma\delta}\] for all $\delta\in \Gamma.$ 
Any finitely generated graded free right $R$-module is of the form $(\gamma_1)R\oplus\ldots\oplus (\gamma_n)R$ for $\gamma_1, \ldots,\gamma_n\in\Gamma$ and $\operatorname{Hom}_R(F,F)$ is a $\Gamma$-graded ring which is graded isomorphic to $\M_n(R)(\gamma_1,\dots,\gamma_n)$ (both \cite{NvO_book} and \cite{Roozbeh_book} contain details).} then $\operatorname{Hom}_R(F,F)\cong_{\gr} \;\M_n(R)(\gamma_1,\dots,\gamma_n)$ as $\Gamma$-graded rings.

We also recall \cite[Remark 2.10.6]{NvO_book} stating the first two parts in Lemma \ref{lemma_on_shifts} and \cite[Theorem 1.3.3]{Roozbeh_book}  stating part (3) for $\Gamma$ abelian. Although we use these results in case when $\Gamma$ is the ring of integers, we note that the proof \cite[Theorem 1.3.3]{Roozbeh_book} generalizes to arbitrary $\Gamma.$ The last sentence in the lemma is the statement of \cite[Proposition 1.4.4. and Theorem 1.4.5]{Roozbeh_book}. 

\begin{lemma}\cite[Remark 2.10.6]{NvO_book}, \cite[Theorem 1.3.3, Proposition 1.4.4, and Theorem 1.4.5]{Roozbeh_book} 
Let $R$ be a $\Gamma$-graded ring and $\gamma_1,\ldots,\gamma_n\in \Gamma.$ 
\begin{enumerate}
\item If $\pi$ a permutation of the set $\{1,\ldots, n\},$ then 
\begin{center}
$\M_n (R)(\gamma_1, \gamma_2,\ldots, \gamma_n)\;\cong_{\gr}\;\M_n (R)(\gamma_{\pi(1)}, \gamma_{\pi(2)} \ldots, \gamma_{\pi(n)})$
\end{center}
by the map $x\mapsto pxp^{-1}$ where $p$ is the permutation matrix with 1 at the $(i, \pi(i))$-th spot for $i = 1,\ldots,n$ and zeros elsewhere. 
 
\item If $\delta$ in the center of $\Gamma,$ 
\begin{center}
$\;\;\M_n (R)(\gamma_1, \gamma_2, \ldots, \gamma_n)\;=\;\M_n (R)(\gamma_1\delta, \gamma_2\delta,\ldots, \gamma_n\delta).$
\end{center}

\item If $\delta\in\Gamma$ is such that there is an invertible element $u_\delta$ in $R_\delta,$ then  
\begin{center}
$\M_n (R)(\gamma_1, \gamma_2, \ldots, \gamma_n)\;\cong_{\gr}\;\M_n (R)(\gamma_1\delta, \gamma_2\ldots, \gamma_n)$
\end{center}
by the map $x\mapsto u^{-1}xu$ where $u$ is the diagonal matrix with $u_\delta, 1, 1, \ldots, 1$ on the diagonal.
\end{enumerate}

If $\Gamma$ is abelian and $R$ and $S$ are $\Gamma$-graded division rings, then $$\hskip1.3cm\M_n (R)(\gamma_1, \gamma_2, \ldots, \gamma_n)\;\cong_{\gr}\; \M_m (S)(\delta_1, \delta_2, \ldots, \delta_m)$$ implies that $R\cong_{\gr}S,$ that $m=n,$ and the list $\delta_1, \delta_2, \ldots, \delta_m$ is obtained from the list $\gamma_1, \gamma_2, \ldots, \gamma_n$ 
by a composition of finitely many operations as in parts (1) to (3). 
\label{lemma_on_shifts} 
\end{lemma}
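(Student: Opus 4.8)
The plan is to realize both algebras as graded endomorphism rings and then exploit graded Morita theory. Using the fact recalled just before the lemma, I would write $\M_n(R)(\gamma_1,\dots,\gamma_n)\cong_{\gr}\operatorname{Hom}_R(F,F)$ for the graded free module $F=(\gamma_1^{-1})R\oplus\dots\oplus(\gamma_n^{-1})R$, and $\M_m(S)(\delta_1,\dots,\delta_m)\cong_{\gr}\operatorname{Hom}_S(G,G)$ for $G=(\delta_1^{-1})S\oplus\dots\oplus(\delta_m^{-1})S$. Each of $F$ and $G$ is a graded progenerator, so the canonical graded Morita context makes $F$ an $(\operatorname{End}_R(F),R)$-bimodule inducing a graded equivalence between the category of graded right $R$-modules and that of graded right $\operatorname{End}_R(F)$-modules, and similarly for $G$ and $S$. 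A graded ring isomorphism $\operatorname{End}_R(F)\cong_{\gr}\operatorname{End}_S(G)$ then lets me transport scalars and compose these equivalences into a single graded equivalence $\Phi$ between the categories of graded right $R$-modules and graded right $S$-modules.

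The invariance statements $R\cong_{\gr}S$ and $m=n$ will follow from categorical properties of $\Phi$. Set $\Gamma_R=\{\gamma\in\Gamma:R_\gamma\neq0\}$; since $R$ is a graded division ring this is a subgroup, and a short argument shows that the graded-simple right $R$-modules are exactly the shifts $(\gamma)R$, with $(\gamma)R\cong_{\gr}(\gamma')R$ iff $\gamma\Gamma_R=\gamma'\Gamma_R$, and that the graded endomorphism ring of any such module is graded isomorphic to $R$. As $\Phi$ preserves graded-simple objects and their graded endomorphism rings, applying it to $R$ produces a graded-simple right $S$-module whose graded endomorphism ring is $S$, giving $R\cong_{\gr}S$. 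Moreover $F$ and $G$ are direct sums of $n$ and $m$ graded-simple modules respectively, and the equivalence carries $F$ to $G$ (both being the images of the free rank-one module over the identified endomorphism rings); since the number of indecomposable summands is preserved, $n=m$.

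It then remains to express the isomorphism through operations (1)--(3). Identifying $S$ with $R$ via the isomorphism just obtained, I have $\M_n(R)(\gamma_1,\dots,\gamma_n)\cong_{\gr}\M_n(R)(\delta_1,\dots,\delta_n)$. Lifting this graded ring isomorphism to the module level, I obtain $F\cong_{\gr}(\alpha)G$ for some $\alpha\in\Gamma$, that is, $\bigoplus_i(\gamma_i^{-1})R\cong_{\gr}\bigoplus_j(\alpha\delta_j^{-1})R$. By the uniqueness of the decomposition of a graded free module over a graded division ring into indecomposable shifts, the multisets of cosets $\{\gamma_i^{-1}\Gamma_R\}$ and $\{\alpha\delta_j^{-1}\Gamma_R\}$ coincide, so there is a permutation $\pi$ with $\gamma_i=\alpha^{-1}\delta_{\pi(i)}\rho_i$ for suitable $\rho_i\in\Gamma_R$.

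Finally I would reconstruct this relation from the three operations: part (1) puts $(\delta_1,\dots,\delta_n)$ into the order $(\delta_{\pi(1)},\dots,\delta_{\pi(n)})$; the global shift by $\alpha^{-1}$, legitimate since $\Gamma$ is abelian so every element is central, replaces each $\delta_{\pi(i)}$ by $\alpha^{-1}\delta_{\pi(i)}$ via part (2); and, bringing each coordinate in turn to the first position using part (1) and noting that $R_{\rho_i}$ contains an invertible element because $\rho_i\in\Gamma_R$, part (3) adjusts $\alpha^{-1}\delta_{\pi(i)}$ to $\gamma_i$. Composing these finitely many operations realizes the given graded isomorphism. I expect the main obstacle to be the two graded-Morita inputs used above, namely that a graded isomorphism of the endomorphism rings lifts to a graded isomorphism of the underlying free modules up to a single shift, and the graded Krull--Schmidt uniqueness of the decomposition of graded free modules over a graded division ring; once these are in place, everything else is bookkeeping with cosets of $\Gamma_R$.
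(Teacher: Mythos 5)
The paper offers no proof of this lemma: parts (1)--(3) are imported from \cite[Remark 2.10.6]{NvO_book} and \cite[Theorem 1.3.3]{Roozbeh_book}, and the final assertion from \cite[Proposition 1.4.4 and Theorem 1.4.5]{Roozbeh_book}, so there is no internal argument to measure yours against. Judged on its own, your reconstruction of the final assertion is correct and is in substance the same argument as in the cited source: identify $\M_n(R)(\gamma_1,\ldots,\gamma_n)$ with the graded endomorphism ring of the graded free module $F=(\gamma_1^{-1})R\oplus\cdots\oplus(\gamma_n^{-1})R$, transfer the problem through the graded Morita equivalence to a comparison of graded free modules over graded division rings, use that every graded module over a graded division ring decomposes uniquely (up to isomorphism of summands) into shifts of the regular module to get $m=n$, $R\cong_{\gr}S$, and the relation $\gamma_i=\alpha^{-1}\delta_{\pi(i)}\rho_i$ with $\rho_i$ in the support subgroup $\Gamma_R$, and then realize that relation by the three listed operations (the use of operation (3) is legitimate precisely because every nonzero homogeneous element of a graded division ring is invertible, so $R_{\rho_i}$ contains a unit). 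Two small remarks. First, you do not verify parts (1)--(3) themselves; these are routine conjugation computations, but they are part of the statement. Second, the conclusion you actually establish is that the two algebras are graded isomorphic \emph{via} a composition of operations (1)--(3); this existence form is exactly what the paper uses in the proof of Lemma \ref{representatives}, so the slight mismatch with the literal phrasing (``the graded isomorphism \ldots is a composition'') is harmless. The two inputs you flag as the main obstacles --- that the graded Morita equivalence commutes with shifts and carries the regular module of $\operatorname{End}_R(F)$ to $F$, and graded Krull--Schmidt for graded free modules over graded division rings --- are both standard and are developed in \cite{Roozbeh_book}, so the outline closes.
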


To shorten the notation, if each $\gamma_i\in \Gamma, i=1,\ldots, k,$ appears $d_i$ times in the list $$\gamma_1,\gamma_1,\ldots, \gamma_1,\; \gamma_2,\gamma_2\ldots, \gamma_2,\; \ldots\ldots\ldots, \;\gamma_k,\gamma_k,\ldots, \gamma_k,$$ we abbreviate this list as $$d_1(\gamma_1), d_2(\gamma_2),\ldots, d_{k}(\gamma_k).$$ So, if $K$ is a graded field, we use the following abbreviation  
\[\M_{n}(K)(\gamma_1,\gamma_1,\ldots, \gamma_1,\; \gamma_2,\gamma_2\ldots, \gamma_2,\; \ldots\ldots\ldots,\; \gamma_k,\gamma_k,\ldots, \gamma_k)=
\M_{n}(K)(d_1(\gamma_1), d_2(\gamma_2),\ldots, d_{k}(\gamma_k))\] For example, if $K$ is a field trivially graded by the group of integers, we use 
$\M_9(K)(4(0),3(1),2(2))$ to shorten $\M_9(K)(0,0,0,0,1,1,1,2,2).$

\subsection{Leavitt path algebras}
 
Let $E$ be a directed graph. The graph $E$ is row-finite if every vertex emits finitely many edges and it is finite if it has finitely many vertices and edges. A sink of $E$ is a vertex which does not emit edges. A vertex of $E$ is regular if it is not a sink and if it emits finitely many edges. A cycle is a closed path such that different edges in the path have different sources. A cycle has an exit if a vertex on the cycle emits an edge outside of the cycle. The graph $E$ is acyclic if there are no cycles. We say that graph $E$ is no-exit if $v$ emits just one edge for every vertex $v$ of every cycle. 

Let $E^0$ denote the set of vertices, $E^1$ the set of edges and $\so$ and $\ra$ denote the source and range maps of a graph $E.$ If $K$ is any field, the \emph{Leavitt path algebra} $L_K(E)$ of $E$ over $K$ is a free $K$-algebra generated by the set  $E^0\cup E^1\cup\{e^*\ |\ e\in E^1\}$ such that for all vertices $v,w$ and edges $e,f,$

\begin{tabular}{ll}
(V)  $vw =0$ if $v\neq w$ and $vv=v,$ & (E1)  $\so(e)e=e\ra(e)=e,$\\
(E2) $\ra(e)e^*=e^*\so(e)=e^*,$ & (CK1) $e^*f=0$ if $e\neq f$ and $e^*e=\ra(e),$\\
(CK2) $v=\sum_{e\in \so^{-1}(v)} ee^*$ for each regular vertex $v.$ &\\
\end{tabular}

By the first four axioms, every element of $L_K(E)$ can be represented as a sum of the form $\sum_{i=1}^n a_ip_iq_i^*$ for some $n$, paths $p_i$ and $q_i$, and elements $a_i\in K,$ for $i=1,\ldots,n.$ Using this representation, it is direct to see that $L_K(E)$ is a unital ring if and only if $E^0$ is finite in which case the sum of all vertices is the identity. For more details on these basic properties, see \cite{LPA_book}.

A Leavitt path algebra is naturally graded by the group of integers $\Zset$ so that the $n$-component $L_K(E)_n$ is  the $K$-linear span of the elements $pq^*$ for paths $p, q$ with $|p|-|q|=n$ where $|p|$ denotes the length of a path $p.$ While one can grade a Leavitt path algebra by any group $\Gamma$ (see \cite[Section 1.6.1]{Roozbeh_book}), we always consider the natural grading by $\Zset.$

\subsection{Finite no-exit graphs}
\label{subsection_finite_no_exit}
If $K$ is a trivially $\Zset$-graded field, let $K[x^m, x^{-m}]$ be the graded field of Laurent polynomials $\Zset$-graded by $K[x^m, x^{-m}]_{mk}=Kx^{mk}$ and $K[x^m, x^{-m}]_{n}=0$ if $m$ does not divide $n.$  

By \cite[Proposition 5.1]{Roozbeh_Lia_Ultramatricial}, if $E$ is a finite no-exit graph, then $L_K(E)$ is graded isomorphic to 
$$R=\bigoplus_{i=1}^k \M_{k_i} (K)(\gamma_{i1}\ldots,\gamma_{ik_i}) \oplus \bigoplus_{j=1}^n \M_{n_j} (K[x^{m_j},x^{-m_j}])(\delta_{j1}, \ldots, \delta_{jn_j})$$
where $k$ is the number of sinks, $k_i$ is the number of paths ending in the sink indexed by $i$ for $i=1,\ldots, k,$ and $\gamma_{il}$ is the length of the $l$-th path ending in the $i$-th sink for $l=1,\ldots, k_i$ and $i=1,\ldots, k.$ In the second term, $n$ is the number of cycles, $m_j$ is the length of the $j$-th cycle for $j=1,\ldots, n,$ $n_j$ is the number of paths which do not contain the cycle indexed by $j$ and which end in a fixed but arbitrarily chosen vertex of the cycle, and $\delta_{jl}$ is the length of the $l$-th path ending in the fixed vertex of the $j$-th cycle for $l=1,\ldots, n_j$ and $j=1,\ldots, n.$ 

Note that this representation is not necessarily unique as Example \ref{example_finite_comet} shows, but it is unique up to a graded isomorphism. We refer to the graded algebra $R$ above as a {\em graded matricial representation} of $L_K(E).$ 

\begin{example} Consider the graph below. $$\xymatrix{ {\bullet} \ar[r]& {\bullet}^u \ar@/^1pc/ [r]   & {\bullet}^v \ar@/^1pc/ [l] }$$
If we consider the number and lengths of paths which end at $u,$ we obtain $\M_3(K[x^2, x^{-2}])(0, 1,1)$ as a graded matricial representation of the corresponding Leavitt path algebra. If we consider the paths ending at $v,$ we obtain $\M_3(K[x^2,x^{-2}])(0, 1, 2).$ These two algebras are graded isomorphic by Lemma \ref{lemma_on_shifts} since $(0,1,1)\to (0+1, 1+1, 1+1)\to (1, 2, 2-2)=(1,2,0)\to (0, 1, 2)$ where $\to$ denotes an application of an operation from Lemma \ref{lemma_on_shifts} and results in a graded isomorphism of the corresponding matrix algebras.   
\label{example_finite_comet} 
\end{example}

\section{Realization of graded matrix algebras as Leavitt path algebras}

Every matrix algebra over a field $K$ or the ring $K[x, x^{-1}]$ is isomorphic to a Leavitt path algebra.
Indeed, for any positive integer $n,$ let $L_n$ be the ``line of length $n-1$'', i.e. the graph with $n$ vertices $v_1, v_2,\ldots, v_n$ and an edge from $v_i$ to $v_{i+1}$ for all $i=1, \ldots, n-1.$ Then $L_K(L_n)\cong \M_n(K)$ (\cite[Proposition 1.3.5]{LPA_book} contains more details). 
Adding an edge from $v_n$ to $v_n$ to $L_n$ produces a graph $C_n$ such that $L_K(C_n)\cong \M_n(K[x, x^{-1}]).$ In this section, we provide a complete description of graded matrix algebras over a trivially $\Zset$-graded field $K$ or over the Laurent polynomials $K[x^m, x^{-m}]$ ($\Zset$-graded as in section \ref{subsection_finite_no_exit}) which are graded isomorphic to Leavitt path algebras. As a consequence, we also present conditions under which a finite direct sum of graded matricial algebras over $K$ and over $K[x^m, x^{-m}]$ can be realized as a Leavitt path algebra.

\begin{lemma} Let $n$ and $m$ be positive integers and $\gamma_1,\gamma_2, \ldots, \gamma_n$ be arbitrary integers.
\begin{enumerate}
\item If the smallest element is subtracted from the list $\gamma_1, \gamma_2, \ldots, \gamma_n,$ the elements are permuted so that they are listed in a nondecreasing order, and if $k$ is the largest element of the new list, the new list is $l_0(0), l_1(1), \ldots, l_k(k)$ 
for some nonnegative integers $l_1, \ldots, l_{k-1}$ and some positive $l_0$ and $l_k$ such that $n=\sum_{i=0}^k l_i.$ The integers $k$ and $l_0,l_1, \ldots, l_k$ are unique for the graded isomorphism class of $\M_n(K)(\gamma_1, \gamma_2, \ldots, \gamma_n).$ 
 
\item If the elements $\gamma_1, \gamma_2, \ldots, \gamma_n$ are considered modulo $m$ and arranged in a nondecreasing order, the resulting list is $l_0(0), l_1(1), \ldots, l_{m-1}(m-1)$ for some nonnegative integers $l_0, l_1,\ldots, l_{m-1}$ such that $n=\sum_{i=0}^{m-1}l_i.$ The integers $l_0, l_1,\ldots, l_{m-1}$ are unique for the graded isomorphism class of $\M_n(K[x^m, x^{-m}])(\gamma_1, \gamma_2, \ldots, \gamma_n)$ up to their order.     
\end{enumerate}
\label{representatives} 
\end{lemma}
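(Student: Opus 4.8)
The plan is to read both parts as statements about canonical representatives for the equivalence on lists $(\gamma_1,\dots,\gamma_n)$ induced by graded isomorphism, and to derive the normal forms from the three operations of Lemma \ref{lemma_on_shifts}. The mechanism behind every uniqueness assertion is the last sentence of that lemma: since $\Zset$ is abelian and $K$ and $K[x^m,x^{-m}]$ are $\Zset$-graded fields, any graded isomorphism between two matrix algebras of the relevant form is a finite composition of the operations (1)--(3). Consequently two lists yield graded isomorphic algebras if and only if they lie in one orbit of the transformation group generated by (1)--(3), and the whole problem reduces to describing this orbit concretely and checking that the stated data are complete orbit invariants.

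For part (1) I would first note which operations survive over the trivially graded field $K$. The nonzero homogeneous elements of $K$ all lie in degree $0$, so the only degree carrying an invertible homogeneous element is $\delta=0$; hence operation (3) is vacuous and the orbit is generated by permutations (operation (1)) together with the simultaneous translation $(\gamma_i)\mapsto(\gamma_i+\delta)$ (operation (2), available for every $\delta\in\Zset$ as $\Zset$ is central). Thus two lists are equivalent precisely when their underlying multisets are translates of one another. Existence of the normal form follows at once: subtracting the minimum is an instance of operation (2) and makes the least entry $0$, sorting is operation (1), and the result is the nondecreasing list $l_0(0),l_1(1),\dots,l_k(k)$ with $l_0>0$ (the minimum $0$ is attained), $l_k>0$ (the maximum $k$ is attained), and $\sum_{i=0}^{k}l_i=n$. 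For uniqueness I would observe that translating a multiset translates its minimum equally, so subtracting the minimum and sorting returns the same list for every member of an orbit; therefore $k$ and $l_0,\dots,l_k$ are orbit invariants and, conversely, reconstruct the normal form, so they form a complete invariant of the graded isomorphism class.

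For part (2) the additional feature is that $K[x^m,x^{-m}]$ carries invertible homogeneous elements exactly in the degrees $m\Zset$, namely the monomials $cx^{mk}$ with $c\in K^\times$. Hence operation (3), together with operation (1), shifts any single entry by an arbitrary multiple of $m$, allowing me to replace each $\gamma_i$ by its residue in $\{0,1,\dots,m-1\}$; sorting then produces $l_0(0),\dots,l_{m-1}(m-1)$ with $\sum_{i=0}^{m-1}l_i=n$, which gives existence. For uniqueness I would follow the effect of each generator on the multiplicity vector $(l_0,\dots,l_{m-1})$, viewed as a function on $\Zset/m\Zset$: operation (1) fixes it, operation (3) fixes it because shifting by a multiple of $m$ preserves residues, and the global translation of operation (2) by $\delta$ rotates it cyclically by $\delta\bmod m$. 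Thus the orbit of $(l_0,\dots,l_{m-1})$ is exactly its cyclic-shift orbit, so the graded isomorphism class determines these multiplicities up to a cyclic permutation, hence in particular up to their order as claimed; the case $m=2$ is the passage between $(1,2)$ and $(2,1)$ in Example \ref{example_finite_comet}.

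The existence constructions and the per-operation bookkeeping are routine; the one point requiring care is the uniqueness in part (2), where I must verify that operations (1)--(3) generate precisely the cyclic rotations of $(l_0,\dots,l_{m-1})$ and nothing larger, so that the sharp invariant is the cyclic class of the multiplicity vector. Everything else rests on the rigidity supplied by the final sentence of Lemma \ref{lemma_on_shifts}, which is what guarantees that no graded isomorphisms exist beyond compositions of (1)--(3).
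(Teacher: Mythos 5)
Your proposal is correct and follows essentially the same route as the paper: both reduce everything to the final sentence of Lemma \ref{lemma_on_shifts}, observe that over $K$ operation (3) is trivial so only permutations and global translations act (forcing the subtract-the-minimum-and-sort normal form to be an orbit invariant), and that over $K[x^m,x^{-m}]$ the three operations act on the residue multiplicity vector $(l_0,\dots,l_{m-1})$ exactly by cyclic rotations. The only cosmetic difference is that in part (1) you phrase uniqueness as invariance of the normal-form map on orbits, whereas the paper argues directly that no nontrivial translation can carry one normal form to another; these are equivalent.
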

\begin{proof}
(1) If $k$ and $l_0, l_1,\ldots, l_k$ are obtained as in the statement of part (1), $\M_n(K)(\gamma_1, \gamma_2, \ldots, \gamma_n)\cong_{\gr}$  $\M_n(K)(l_0(0), l_1(1), \ldots, l_k(k))$ by Lemma \ref{lemma_on_shifts}. To show uniqueness, assume that  
\[\M_n(K)(l_0(0), l_1(1), \ldots, l_k(k))\cong_{\gr}\M_n(K)(l'_0(0), l'_1(1), \ldots, l'_{k'}(k'))\] for some nonnegative $k'$ and $ l'_1, \ldots, l'_{k'-1}$ and positive $l_0', l'_{k'}$ such that  $n=\sum_{i=0}^{k'} l'_i.$
By Lemma \ref{lemma_on_shifts}, 
the list $l'_0(0), l'_1(1), \ldots, l'_{k'}(k')$ is obtained from $l_0(0), l_1(1), \ldots, l_k(k)$ by applying finitely many operations of the three types from Lemma \ref{lemma_on_shifts}. Since the 0-component is the only nonzero component of $K,$ the only feasible operation as in part (3) of Lemma \ref{lemma_on_shifts} does not change the list of shifts. If a positive element is added to the list $l_0(0), l_1(1), l_2(2), \ldots, l_k(k),$ the resulting list does not have 0 in it and if a negative element is added to the same list, the resulting list does not consist of nonnegative elements, hence an operation from part (2) of Lemma \ref{lemma_on_shifts} is not present. This means that only an operation from part (1) of Lemma \ref{lemma_on_shifts} can be performed, so $l'_0(0), l'_1(1), \ldots, l'_{k'}(k')$ is obtained by a permutation of $l_0(0), l_1(1), \ldots, l_k(k).$ However, since the elements are already listed in a nondecreasing order, this means that the lists are equal so $k=k'$ and $l_i=l_i'$ for all $i=0,\ldots, k.$  

(2) If $l_0, \ldots, l_{m-1}$ are obtained as in the statement of part (2), $\M_n(K[x^m, x^{-m}])(\gamma_1, \gamma_2, \ldots, \gamma_n)\cong_{\gr}$  $\M_n(K[x^m, x^{-m}])(l_0(0), l_1(1), \ldots, l_{m-1}(m-1))$ by Lemma \ref{lemma_on_shifts}. To show uniqueness, assume that  
\[\M_n(K[x^m, x^{-m}])(l_0(0), l_1(1), \ldots, l_{m-1}(m-1))\cong_{\gr}\M_n(K)(l'_0(0), l'_1(1), \ldots, l'_{m-1}(m-1))\] for some nonnegative $l'_0, l'_1, \ldots, l'_{m-1}$ such that $n=\sum_{i=0}^{m-1} l'_i.$
By Lemma \ref{lemma_on_shifts}, 
the list $l'_0(0), l'_1(1), \ldots,$ $l'_{m-1}(m-1)$ is obtained from $l_0(0), l_1(1), \ldots,l_{m-1}(m-1)$ by applying finitely many operations of the three types from Lemma \ref{lemma_on_shifts}.
Since the elements in both lists of shifts are already in $\{0, 1, \ldots, m-1\},$ if an operation from part (2) of Lemma \ref{lemma_on_shifts} is present, then the results are considered modulo $m$ again using part (3) of Lemma \ref{lemma_on_shifts}. To obtain the resulting list in a nondecreasing order, the elements are permuted using part (1) of Lemma \ref{lemma_on_shifts}. This shows that there is an integer $k$ such that $l'_{i}=l_{i+_m k}$ for all $i=0, \ldots, m-1$ where $+_m$ denotes the operation of the cyclic abelian group $\Zset/m\Zset$ of order $m.$ If we reorder the elements $l_0, \ldots, l_{m-1}$ using the permutation of $\{0, \ldots, m-1\}$ given by $i\mapsto i+_m k,$ the list becomes $l_{0+_mk}=l'_0, \ldots, l_{m-1+_mk}=l'_{m-1}.$
\end{proof}

We say that the nonnegative integers $k$ and $l_0, l_1, \ldots, l_k$ from part (1) of Lemma \ref{representatives} are {\em representatives} of the graded isomorphism class of $\M_n(K)(\gamma_1, \gamma_2, \ldots, \gamma_n).$ By Lemma \ref{representatives}, such representatives are unique. We also say that the nonnegative integers $l_0, l_1, \ldots, l_{m-1}$ from part (2) of Lemma \ref{representatives} are {\em representatives} of the graded isomorphism class of $\M_n(K[x^m, x^{-m}])(\gamma_1, \gamma_2, \ldots, \gamma_n).$ By Lemma \ref{representatives}, such representatives are unique up to their order. For example, $m=2$ and $l_0=1,$ $l_1=2$ for the algebras $\M_3(K[x^2, x^{-2}])(0,1,1)$ and $\M_3(K[x^2,x^{-2}])(0, 1, 2)$ from Example \ref{example_finite_comet}.  

\begin{proposition}
Let $n$ be a positive integer, $\gamma_1,\gamma_2,\ldots, \gamma_n$ be arbitrary integers, and $R$ be the algebra $\M_n(K)(\gamma_1, \gamma_2, \ldots, \gamma_n).$ The following conditions are equivalent. 
\begin{enumerate}
\item $R$ is graded isomorphic to a Leavitt path algebra. 

\item $R$ is graded isomorphic to a Leavitt path algebra of a finite acyclic graph with a unique sink. 
 
\item $R$ is graded isomorphic to $\M_n(K)(0, l_1(1), l_2(2), \ldots, l_k(k))$ for some nonnegative $k$ and positive integers $l_1, \ldots, l_k$ such that $n=1+\sum_{i=1}^k l_i.$    

\item If $k$ and  $l_0, \ldots, l_k$ are representatives of the graded isomorphism class of $R$, then $l_i$ is {\em positive} for all $i=1,\ldots, k$ and $l_0=1.$
\end{enumerate}
\label{matrix_rep_of_sinks}
\end{proposition}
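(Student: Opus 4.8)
The plan is to prove the cycle $(2)\Rightarrow(1)\Rightarrow(2)$ together with $(2)\Rightarrow(3)\Rightarrow(2)$ and the equivalence $(3)\Leftrightarrow(4)$, so that all four statements become equivalent. The implication $(2)\Rightarrow(1)$ is immediate, since (2) is a special case of (1). The equivalence $(3)\Leftrightarrow(4)$ is essentially a reformulation: the shifts of $\M_n(K)(0, l_1(1), \ldots, l_k(k))$ are already listed in nondecreasing order starting at $0$, so by Lemma \ref{representatives} its representatives are exactly $k$ and $l_0=1, l_1, \ldots, l_k$. Since representatives are a graded-isomorphism invariant, $R$ is graded isomorphic to an algebra of the form in (3) precisely when the representatives of $R$ have $l_0=1$ and all $l_i$ with $i\geq 1$ positive, which is (4).

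For the main implication $(1)\Rightarrow(2)$, I would first note that forgetting the grading gives $R\cong\M_n(K)$, so $R$ is simple and finite-dimensional as an ungraded ring; a graded isomorphism $R\cong_{\gr} L_K(F)$ is in particular a ring isomorphism, so $L_K(F)$ shares these properties. Finite-dimensionality forces $F$ to be a finite graph (infinitely many vertices or edges would give infinitely many linearly independent homogeneous elements) and acyclic (a cycle $c$ yields the family $\{c^m:m\geq 1\}$ of elements of distinct degrees, hence linearly independent). Being finite and acyclic, $F$ is no-exit, so the representation recalled in Section \ref{subsection_finite_no_exit} applies and has no Laurent-polynomial summands; thus $L_K(F)\cong_{\gr}\bigoplus_{i=1}^{k_F}\M_{k_i}(K)(\ldots)$, where $k_F$ is the number of sinks of $F$. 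As an ungraded ring this is a direct sum of $k_F$ simple matrix algebras, and since $L_K(F)\cong\M_n(K)$ is simple we must have $k_F=1$. Hence $F$ is a finite acyclic graph with a unique sink, which is (2).

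For $(2)\Rightarrow(3)$, suppose $R\cong_{\gr} L_K(F)$ with $F$ finite acyclic and having a single sink $s$. By the representation from Section \ref{subsection_finite_no_exit}, $L_K(F)\cong_{\gr}\M_n(K)(\gamma_1, \ldots, \gamma_n)$, where the $\gamma_l$ are the lengths of the paths ending at $s$. The trivial path at $s$ is the unique path of length $0$ ending at $s$, so $0$ is the smallest shift and occurs exactly once, giving $l_0=1$ after the normalization of Lemma \ref{representatives}. If $k$ is the maximal such length, then choosing a path of length $k$ ending at $s$ and deleting its initial edges produces paths of every length $0,1,\ldots,k$ ending at $s$; hence every multiplicity $l_1, \ldots, l_k$ is positive. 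This is exactly the form in (3).

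The remaining implication $(3)\Rightarrow(2)$ is where an explicit construction is required, and this is the step I expect to be the main obstacle, because the naive idea of attaching $l_i$ disjoint paths of length $i$ to a common sink overcounts: a single attached path of length $i$ also contributes intermediate paths of lengths $1,\ldots,i-1$, so that construction realizes only nonincreasing multiplicity sequences. Instead, given $k$ and positive $l_1, \ldots, l_k$ with $n=1+\sum_{i=1}^k l_i$, I would build a layered graph $F$: place a single vertex $v_0$ at level $0$ (the sink) and $l_i$ vertices at level $i$ for $1\leq i\leq k$, and let each vertex at level $i\geq 1$ emit exactly one edge into a fixed vertex of level $i-1$ (possible since $l_{i-1}>0$). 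Then $F$ is finite and acyclic with $v_0$ as its unique sink, and since every vertex emits at most one edge, each level-$i$ vertex has a unique path to $v_0$, of length exactly $i$. Thus the multiset of lengths of paths ending at $v_0$ is $0$ once and $i$ with multiplicity $l_i$, so the representation of Section \ref{subsection_finite_no_exit} gives $L_K(F)\cong_{\gr}\M_n(K)(0, l_1(1), \ldots, l_k(k))\cong_{\gr} R$, establishing (2) and closing the cycle.
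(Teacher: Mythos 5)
Your proof is correct and, in its constructive core, the same as the paper's: the path-length analysis at the unique sink for (2) $\Rightarrow$ (3), the layered graph for (3) $\Rightarrow$ (2) (your level-$i$ vertices each feeding a single edge into a fixed level-$(i-1)$ vertex is exactly the paper's graph $E=\bigcup_{i=0}^k E_i$), and the reduction of (3) $\Leftrightarrow$ (4) to Lemma \ref{representatives} all match, and your observation about why the ``naive'' attachment of disjoint paths fails is exactly the point of the construction. The one step where you take a genuinely different route is (1) $\Rightarrow$ (2): the paper invokes \cite[Corollary 3.5]{Lia_no-exit} to conclude that the graph is row-finite and acyclic and then uses \emph{graded} simplicity of $R$ to force a unique sink, whereas you deduce finiteness and acyclicity from the \emph{ungraded} finite-dimensionality of $\M_n(K)$ (infinitely many vertices or edges, or the powers $c^m$ of a cycle, would give infinitely many linearly independent elements) and then use ungraded simplicity of $\M_n(K)$ to force a single matrix summand, hence a single sink. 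Your version is more elementary and self-contained; the paper's citation is the one that transfers to the Laurent-polynomial setting of Proposition \ref{matrix_rep_of_comets}, where finite-dimensionality is unavailable and the graded chain-condition results of \cite{Lia_no-exit} are genuinely needed. Both arguments are sound; the only cosmetic omission on your side is the remark that the matrix size produced by the representation equals $n$, which follows from the last part of Lemma \ref{lemma_on_shifts}.
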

\begin{proof}
If  $R\cong_{\gr} L_K(E)$ for some graph $E$, then $E$ is row-finite and acyclic by \cite[Corollary 3.5]{Lia_no-exit}. Since $R$ is unital, $E$ has finitely many vertices. A row-finite graph with finitely many vertices is finite, so $E$ is finite. 
The algebra $R$ is graded simple (see the second paragraph of \cite[Remark 1.4.8]{Roozbeh_book}), so $E$ has only one sink since otherwise a graded matricial representation of $L_K(E)$ is not graded simple. This shows (1) $\Rightarrow$ (2). The converse (2) $\Rightarrow$ (1) is direct. 

To show (2) $\Rightarrow$ (3), let $R\cong_{\gr} L_K(E)$ for some finite acyclic graph $E$ with a unique sink $v.$ Since the set of lengths of paths of $E$ which end at $v$ is finite, there is a maximal element $k$ of this set and a path $p$ to $v$ of length $k.$ Let $l_i$ be the number of paths of length $i$ to $v$ for $i=0,\ldots, k.$ Then $\M_{n'}(K)(l_0(0), l_1(1), l_2(2), \ldots, l_k(k))$ where $n'=\sum_{i=0}^k l_i$ is graded isomorphic to a graded matricial representation of $L_K(E)$ and, hence, to $R$ as well. The relation $n=n'$ holds by Lemma \ref{lemma_on_shifts}. The trivial path is the only one of length zero so $l_0=1.$ The subpaths of $p$ which end at $v$ have lengths $0, 1,2,\ldots, k,$ so $l_i$ is positive for each $i=0,\ldots, k.$    

To show (3) $\Rightarrow$ (2), let $k$ be any nonnegative integer and $l_1, \ldots, l_k$ be positive integers such that $n=1+\sum_{i=1}^k l_i.$ We construct a finite acyclic graph $E$ with a unique sink such that $L_K(E)\cong_{\gr}\M_n(K)(0, l_1(1), l_2(2), \ldots, l_k(k)).$ Let $E_0$ be an isolated vertex $v_{01}.$ Obtain $E_1$ by adding $l_1$ new vertices $v_{11}, \ldots, v_{1l_1}$ to $E_0$ and an edge from $v_{1j}$ to $v_{01}$ for all $j=1,\ldots, l_1.$ If $E_{i-1}$ is created, obtain $E_i$ by adding $l_i$ new vertices $v_{i1}, \ldots, v_{il_i}$ to $E_{i-1}$ and an edge from $v_{ij}$ to $v_{(i-1)1}$ for all $j=1,\ldots, l_i.$ After $E_k$ is created, let $E=\bigcup_{i=0}^k E_i$. By construction, $E$ is finite and acyclic and $v_{01}$ is the only sink. The trivial path to $v_{01}$ is the only one of length zero and $E$ has exactly $l_i$ paths of length $i$ ending at $v_{01}$ for all $i=1,\ldots, k.$ So, $L_K(E)\cong_{\gr}\M_n(K)(0, l_1(1), l_2(2), \ldots, l_k(k)).$ 

Conditions (3) and (4) are equivalent by Lemma \ref{representatives} since the representatives $k$ and $l_0, \ldots, l_k$ are unique.   
\end{proof}

\begin{remark}
The key requirement in Proposition \ref{matrix_rep_of_sinks} is that the representatives $l_1, \ldots, l_{k-1}$ of the graded isomorphism class of $R$ are {\em positive}. This ensures that there are no ``gaps'' in the lengths of paths. For example, the algebra $\M_2(K)(0, 2)$ is graded isomorphic to no Leavitt path algebra since if there is a path of length 2 to a sink, then there has to be a path of length 1 to that sink also.  
\end{remark}

A graph is said to be a {\em comet} if every vertex connects to a unique cycle of the graph. Such graph is no-exit since if there is an exit $e$ from the only cycle $c,$ then the range of $e$ connects to the cycle $c$ implying the existence of another cycle containing $e$ and a path from the range of $e$ to some vertex of $c.$ Since the cycle $c$ is unique, no such $e$ can exist.

\begin{proposition}
Let $m$ and $n$ be positive integers, $\gamma_1,\gamma_2,\ldots, \gamma_n$ be arbitrary integers, and let $R=\M_n(K[x^m, x^{-m}])(\gamma_1, \gamma_2, \ldots, \gamma_n).$ The following conditions are equivalent.  
\begin{enumerate}
\item $R$ is graded isomorphic to a Leavitt path algebra.
 
\item $R$ is graded isomorphic to a Leavitt path algebra of a finite comet graph.

\item $R$ is graded isomorphic to $\M_n(K[x^m, x^{-m}])(l_0(0), l_1(1), \ldots, l_{m-1}(m-1))$ for some positive integers $l_0, l_1,\ldots, l_{m-1}$ such that $n=\sum_{i=0}^{m-1}l_i.$   

\item If $l_0, \ldots, l_{m-1}$ are representatives of the graded isomorphism class of $R$, then $l_i$ is {\em positive} for all $i=0,\ldots, m-1.$
\end{enumerate}
\label{matrix_rep_of_comets}
\end{proposition}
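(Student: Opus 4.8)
The plan is to establish the equivalences through the cycle $(1)\Rightarrow(2)\Rightarrow(3)\Rightarrow(2)\Rightarrow(1)$ together with $(3)\Leftrightarrow(4)$, mirroring the proof of Proposition \ref{matrix_rep_of_sinks}. The implication $(2)\Rightarrow(1)$ is immediate, since a finite comet graph is a finite graph and its Leavitt path algebra is a Leavitt path algebra. The equivalence $(3)\Leftrightarrow(4)$ follows directly from part (2) of Lemma \ref{representatives}: condition (3) asserts that $R$ is graded isomorphic to the algebra whose representatives $l_0,\ldots,l_{m-1}$ are all positive, while condition (4) asserts that the representatives of $R$, which are unique up to their order by Lemma \ref{representatives}, are all positive; these two statements therefore coincide. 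So the substance lies in $(1)\Rightarrow(2)$, $(2)\Rightarrow(3)$, and $(3)\Rightarrow(2)$.

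For $(1)\Rightarrow(2)$, suppose $R\cong_{\gr}L_K(E)$. Since $R$ is unital, $E^0$ is finite, and since $R$ is a matrix algebra over the graded field $K[x^m,x^{-m}]$, it is graded simple. I would first invoke the appropriate structural result of \cite{Lia_no-exit} (the companion of \cite[Corollary 3.5]{Lia_no-exit} governing graded matrix algebras over the Laurent polynomials) to conclude that $E$ is a finite no-exit graph. The graded matricial representation of $L_K(E)$ provided by \cite[Proposition 5.1]{Roozbeh_Lia_Ultramatricial} is then a direct sum of matrix algebras over $K$ (one per sink) and over $K[x^{m_j},x^{-m_j}]$ (one per cycle). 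Graded simplicity of $R$, together with the rigidity in the last sentence of Lemma \ref{lemma_on_shifts}, forces this direct sum to consist of a single block; comparison with $R$ shows that this block cannot come from a sink, so $E$ has no sinks and exactly one cycle, whose length must be $m$. A finite no-exit graph with no sinks and a single cycle is a comet, since every vertex, having no sink at which to terminate, must connect to that cycle. I expect this implication to be the main obstacle, as it requires reconstructing the graph-theoretic shape of $E$ from the algebraic data and locating the precise structural input from \cite{Lia_no-exit}.

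For $(2)\Rightarrow(3)$, let $R\cong_{\gr}L_K(E)$ with $E$ a finite comet whose cycle has length $m'$. By \cite[Proposition 5.1]{Roozbeh_Lia_Ultramatricial}, a graded matricial representation of $L_K(E)$ is $\M_n(K[x^{m'},x^{-m'}])(\delta_1,\ldots,\delta_n)$, where the $\delta_l$ are the lengths of the paths ending at a fixed vertex $v_0$ of the cycle which do not traverse the full cycle; comparison with $R$ via the rigidity statement of Lemma \ref{lemma_on_shifts} gives $m'=m$. The crucial observation is that travelling backward along the cycle from $v_0$ produces, for each $i=0,1,\ldots,m-1$, a path of length exactly $i$ ending at $v_0$ and not containing the full cycle; these $m$ paths realize every residue class modulo $m$. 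Hence, after reducing the list $\delta_1,\ldots,\delta_n$ modulo $m$ as in part (2) of Lemma \ref{representatives}, each residue $i\in\{0,\ldots,m-1\}$ occurs at least once, so the representatives $l_0,\ldots,l_{m-1}$ of $R$ are all positive, which is condition (3) (equivalently (4)). This step is the exact analog of the argument in Proposition \ref{matrix_rep_of_sinks} that the subpaths of a maximal path fill in all lengths $0,1,\ldots,k$.

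For $(3)\Rightarrow(2)$, given positive integers $l_0,\ldots,l_{m-1}$ with $n=\sum_{i=0}^{m-1}l_i$, I would construct a comet $E$ explicitly. Start with a cycle on vertices $w_0,w_1,\ldots,w_{m-1}$ with edges oriented so that $w_i$ has a path of length $i$ to $w_0$ along the cycle for each $i=0,\ldots,m-1$; this base cycle already contributes exactly one path of each residue $0,1,\ldots,m-1$ ending at $w_0$. Then, for each $i$, attach $l_i-1$ further vertices by edges directed toward the cycle so as to create $l_i-1$ additional paths to $w_0$ of length congruent to $i$ modulo $m$ (for instance, a new source vertex with an edge into $w_{(i-1)\bmod m}$ produces a path of length congruent to $i$). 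The resulting graph $E$ is a finite comet with cycle length $m$ in which the number of paths ending at $w_0$ of length congruent to $i$ modulo $m$ is exactly $l_i$, and the total number of such paths is $\sum_{i=0}^{m-1}l_i=n$. By \cite[Proposition 5.1]{Roozbeh_Lia_Ultramatricial} and Lemma \ref{representatives}, $L_K(E)\cong_{\gr}\M_n(K[x^m,x^{-m}])(l_0(0),\ldots,l_{m-1}(m-1))\cong_{\gr}R$, completing the cycle of implications.
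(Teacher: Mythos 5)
Your proposal is correct and follows essentially the same route as the paper: the paper cites \cite[Corollary 3.6]{Lia_no-exit} to get directly that $E$ is a row-finite no-exit graph \emph{without sinks}, then uses graded simplicity to force a single cycle, proves $(2)\Rightarrow(3)$ via the subpaths of the cycle realizing every residue modulo $m$, and constructs for $(3)\Rightarrow(2)$ exactly the comet you describe (a cycle of length $m$ with $l_i-1$ extra source vertices attached at the appropriate positions). Your small detour of deducing the absence of sinks from the block structure of the matricial representation, rather than from the cited corollary, is a valid variant and does not change the substance of the argument.
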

\begin{proof} 
To show (1) $\Rightarrow$ (2), assume that $R\cong_{\gr} L_K(E)$ for some graph $E$. By \cite[Corollary 3.6]{Lia_no-exit}, $E$ is 
a row-finite no-exit graph without sinks.  Since $R$ is unital, $E$ has finitely many vertices so the condition that $E$ is row-finite implies that $E$ is finite. The algebra $R$ is graded simple, so $E$ has only one cycle since otherwise a graded matricial representation of $L_K(E)$ is not graded simple. Hence, $E$ is a finite comet graph. The converse (2) $\Rightarrow$ (1) is direct. 

To show (2) $\Rightarrow$ (3), let $R\cong_{\gr} L_K(E)$ for some finite comet graph $E.$ If $m'$ is the length of the cycle of $E$, $v$ is a vertex of the cycle, $l_i$ is the number of paths to $v$ of length $i$ modulo $m'$ which do not contain the cycle, and $n'=\sum_{i=0}^{m'-1}l_i,$ then $\M_{n'}(K[x^{m'}, x^{-m'}])(l_0(0), l_1(1), \ldots, l_{m'-1}(m'-1))$ is graded isomorphic to a graded matricial representation of $L_K(E)$ and so to $R$ also. By Lemma \ref{lemma_on_shifts}, $K[x^{m'}, x^{-m'}]\cong_{\gr}K[x^{m}, x^{-m}].$ Assuming that $m'<m,$ produces a contradiction by considering the $m'$-components. One shows that $m\geq m'$ similarly and so $m=m'.$ By Lemma \ref{lemma_on_shifts}, $n=n'.$ For $i=0, \ldots, m-1,$ $l_i$ is positive since there is a subpath of the cycle which ends at $v$ and which has length $i.$ 

To show (3) $\Rightarrow$ (2), consider any positive integers $l_0, \ldots, l_{m-1}$ such that $n=\sum_{i=0}^{m-1}l_i.$ Construct a finite comet graph $E$ as follows. Consider an isolated cycle of length $m$ with vertices $v_0, \ldots, v_{m-1}$ ordered so that $v_{i+1}$ emits an edge to $v_{i} $ for $i=0, \ldots, m-2$ and $v_{0}$ emits an edge to $v_{m-1}.$ For each $i=1, \ldots, m-1,$ add $l_i-1$ new vertices $v_{i1}, \ldots, v_{i(l_i-1)}$ and an edge from $v_{ij}$ to $v_{i-1}$ for each $j=1,\ldots, l_i-1.$ Add also $l_0-1$ new vertices $v_{01}, \ldots, v_{0(l_0-1)}$ and an edge from $v_{0j}$ to $v_{m-1}$ for each $j=1,\ldots, l_0-1.$ The graph $E$ obtained in this way is a finite comet graph with a cycle of length $m.$ For each $i=1, \ldots, m-1,$ there are $l_i-1$ paths to $v_0$ of length $i$ which are not subpaths of the cycle and there is one path from $v_i$ to $v_0$ inside of the cycle. There are $l_0-1$ paths to $v_0$ of length $m$ which are not subpaths of the cycle and there is a trivial path to $v_0.$ So, $l_i$ is the number of paths to $v_0$ of length $i$ modulo $m$ which do not contain the cycle. Thus, $L_K(E)\cong_{\gr}\M_n(K[x^m, x^{-m}])(l_0(0), l_1(1), \ldots, l_{m-1}(m-1)).$

Conditions (3) and (4) are equivalent by Lemma \ref{representatives} since reordering a list of positive elements $l_0, \ldots, l_{m-1}$ produces a list where all elements are also positive. 
\end{proof}

\begin{proposition}
Let $k,n$ be nonnegative, $k_i, n_j, m_j$ positive, and $\gamma_{i1}\ldots,\gamma_{ik_i}, \delta_{j1}, \ldots, \delta_{jn_j}$ arbitrary integers for $i=1,\ldots, k, j=1,\ldots, n.$ If 
$$R=\bigoplus_{i=1}^k \M_{k_i} (K)(\gamma_{i1}\ldots,\gamma_{ik_i}) \oplus \bigoplus_{j=1}^n \M_{n_j} (K[x^{m_j},x^{-m_j}])(\delta_{j1}, \ldots, \delta_{jn_j}),$$ then the following conditions are equivalent. 
\begin{enumerate}
\item $R$ is graded isomorphic to a Leavitt path algebra.
 
\item $R$ is graded isomorphic to a Leavitt path algebra of a finite no-exit graph.

\item There are some nonnegative integers $k'_i$ and positive integers $l_{i1}, l_{i2},\ldots, l_{ik'_i}, i=1,\ldots, k,$ and $s_{j0}, s_{j1},\ldots, s_{j(m_j-1)}, j=1,\ldots,n$ such that $k_i=1+l_{i1}+l_{i2}+\ldots+l_{ik'_i},$ for all $i=1,\ldots, k,$ that $n_j=s_{j0}+s_{j1}+\ldots +s_{j(m_j-1)}$ for all $j=1,\ldots, n,$ and that 
$R$ is graded isomorphic to \[\bigoplus_{i=1}^k \M_{k_i} (K)(0, l_{i1}(1), l_{i2}(2), \ldots, l_{ik'_i}(k'_i)) \oplus \bigoplus_{j=1}^n \M_{n_j} (K[x^{m_j},x^{-m_j}])(s_{j0}(0), s_{j1}(1), \ldots, s_{j(m_j-1)}(m_j-1)).\]    

\item If $k_i'$ and $l_{i0}, \ldots, l_{ik'_i}$ are representatives of the graded isomorphism class of the algebra $\M_{k_i} (K)(\gamma_{i1}\ldots,\gamma_{ik_i})$ for $i=1,\ldots, k$ and if $s_{j0}, \ldots, s_{j(m_j-1)}$ are representatives of the graded isomorphism class of the algebra $\M_{n_j} (K[x^{m_j},x^{-m_j}])(\delta_{j1}, \ldots, \delta_{jn_j})$ for $j=1,\ldots,n$ then $l_{i0}=1$ and $l_{i1}, \ldots, l_{ik'_i}$ are {\em positive} for all $i=1,\ldots, k$ and $s_{j0}, \ldots, s_{j(m_j-1)}$ are {\em positive} for all $j=1,\ldots,n.$ 
\end{enumerate}
\label{matrix_rep_of_no-exits} 
\end{proposition}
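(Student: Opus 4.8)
The plan is to reduce everything to the two single-block results, Proposition \ref{matrix_rep_of_sinks} and Proposition \ref{matrix_rep_of_comets}, by establishing (2) $\Rightarrow$ (1) (trivial), (1) $\Rightarrow$ (2), (2) $\Rightarrow$ (3), (3) $\Rightarrow$ (2), and the equivalence (3) $\Leftrightarrow$ (4); this makes (1), (2), (3), (4) all equivalent. I would dispatch (3) $\Leftrightarrow$ (4) immediately from Lemma \ref{representatives} applied to each summand separately, exactly as in the two cited propositions: for a sink block the representatives satisfy $l_{i0}=1$ with the remaining $l_{il}$ positive, and for a comet block any reordering of the positive representatives $s_{jl}$ keeps them positive.

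For (1) $\Rightarrow$ (2), I would write $R \cong_{\gr} L_K(E)$ and note that, being a finite direct sum of graded matrix algebras over the trivially graded field $K$ and over the graded Laurent fields $K[x^{m_j}, x^{-m_j}]$, the ring $R$ is a graded matricial algebra; hence $E$ is row-finite and no-exit by \cite[Corollaries 3.5 and 3.6]{Lia_no-exit}. Unitality of $R$ forces $E^0$ to be finite, so $E$ is finite, which is (2).

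The essential step is (2) $\Rightarrow$ (3). Given $R \cong_{\gr} L_K(E)$ with $E$ finite no-exit, \cite[Proposition 5.1]{Roozbeh_Lia_Ultramatricial} supplies a graded matricial representation $S$ of $L_K(E)$ that is itself a direct sum of sink blocks over $K$ (one per sink of $E$) and comet blocks over $K[x^{m'}, x^{-m'}]$ (one per cycle of $E$). Each block of $R$ and of $S$ is graded simple by \cite[Remark 1.4.8]{Roozbeh_book} and is the unital ideal determined by a central idempotent, so the decomposition into minimal graded ideals is unique up to permutation; thus $R \cong_{\gr} S$ induces a bijection pairing each block of $R$ with a graded isomorphic block of $S$. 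By the last sentence of Lemma \ref{lemma_on_shifts}, a graded isomorphism of two such blocks forces a graded isomorphism of their coefficient fields, and since the support of $K$ is $\{0\}$ while that of $K[x^m, x^{-m}]$ is $m\Zset$, these coefficient fields are pairwise non-graded-isomorphic across distinct types and distinct periods. Consequently sink blocks pair with sink blocks, comet blocks with comet blocks, and paired comet blocks share the same period. Applying the (2) $\Rightarrow$ (3) direction of Proposition \ref{matrix_rep_of_sinks} to each sink block and the (2) $\Rightarrow$ (3) direction of Proposition \ref{matrix_rep_of_comets} to each comet block then rewrites every block in the required gap-free normalized form with the asserted positivity, which is precisely (3).

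For (3) $\Rightarrow$ (2), I would assemble $E$ as a disjoint union: for each $i$ the finite acyclic one-sink graph built in the (3) $\Rightarrow$ (2) part of Proposition \ref{matrix_rep_of_sinks} realizing $\M_{k_i}(K)(0, l_{i1}(1), \dots, l_{ik'_i}(k'_i))$, and for each $j$ the finite comet graph built in the (3) $\Rightarrow$ (2) part of Proposition \ref{matrix_rep_of_comets} realizing $\M_{n_j}(K[x^{m_j}, x^{-m_j}])(s_{j0}(0), \dots, s_{j(m_j-1)}(m_j-1))$. A finite disjoint union of finite no-exit graphs is finite and no-exit, and the Leavitt path algebra of a disjoint union is the graded direct sum of the Leavitt path algebras of its components, so $L_K(E) \cong_{\gr} R$. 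I expect the main obstacle to be the block-matching used in (2) $\Rightarrow$ (3): one must be certain that the prescribed decomposition of $R$ and the representation $S$ from \cite[Proposition 5.1]{Roozbeh_Lia_Ultramatricial} agree summand by summand --- that no sink block is graded isomorphic to a comet block and that the periods are recovered --- and this is exactly what the uniqueness of the minimal graded ideal decomposition, the last sentence of Lemma \ref{lemma_on_shifts}, and the supports of the coefficient fields are there to secure.
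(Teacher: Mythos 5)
Your proposal is correct and follows essentially the same route as the paper: reduce to Propositions \ref{matrix_rep_of_sinks} and \ref{matrix_rep_of_comets} blockwise, using the graded Wedderburn--Artin uniqueness (your central-idempotent/minimal-graded-ideal argument is exactly what the paper's citation of \cite[Remark 1.4.8]{Roozbeh_book} supplies) together with the $m'=m$ comparison of coefficient fields to match summands, a disjoint union of the constructed graphs for (3) $\Rightarrow$ (2), and Lemma \ref{representatives} for (3) $\Leftrightarrow$ (4). The only cosmetic difference is that the paper cites \cite[Corollary 3.4]{Lia_no-exit} for the row-finite no-exit conclusion in (1) $\Rightarrow$ (2) rather than Corollaries 3.5 and 3.6.
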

\begin{proof}
If $R\cong_{\gr} L_K(E)$ for some graph $E,$ $E$ is 
row-finite and no-exit by  \cite[Corollary 3.4]{Lia_no-exit}.  Since $R$ is unital and $E$ is row-finite, $E$ is finite. This shows (1) $\Rightarrow$ (2). The converse (2) $\Rightarrow$ (1) is direct. 

To show (2) $\Rightarrow$ (3), let $R\cong_{\gr} L_K(E)$ for some finite no-exit graph $E.$ By the graded version of the Wedderburn-Artin Theorem (see \cite[Remark 1.4.8]{Roozbeh_book}), by the argument that $K[x^{m'}, x^{-m'}]\cong_{\gr}K[x^{m}, x^{-m}]$ implies that $m'=m$ shown in the proof of (2) $\Rightarrow$ (3) of Proposition \ref{matrix_rep_of_comets}, and by reordering the terms of $R$ if necessary, we can assume that a graded matricial representation $M$ of $L_K(E)$ is  
$\bigoplus_{i=1}^{k} \M_{k_i} (K)(\gamma'_{i1}\ldots,\gamma'_{ik_i}) \oplus \bigoplus_{j=1}^{n} \M_{n_j} (K[x^{m_j},x^{-m_j}])(\delta'_{j1}, \ldots, \delta'_{jn_j})$ for some integers $\gamma'_{i1}\ldots,\gamma'_{ik_i}$ and $\delta'_{j1}, \ldots, \delta'_{jn_j}.$
For each $i=1,\ldots, k,$ the proof of (2) $\Rightarrow$ (3) in Proposition \ref{matrix_rep_of_sinks} implies that there is a nonnegative integer $k_i'$ and positive integers $l_{i1},\ldots, l_{ik_i'}$ such that $k_i=1+l_{i1}+\ldots+l_{ik'_i}$ and that there is $\phi_i: \M_{k_i} (K)(\gamma'_{i1}\ldots,\gamma'_{ik_i})\cong_{\gr}\M_{k_i} (K)(0, l_{i1}(1), \ldots, l_{ik'_i}(k'_i)).$ 
For each $j=1,\ldots, n,$ the proof of (2) $\Rightarrow$ (3) in Proposition \ref{matrix_rep_of_comets} implies that there are positive integers $s_{j0},\ldots, s_{j(m_j-1)}$ such that $n_j=s_{j0}+\ldots +s_{j(m_j-1)}$ and that there is 
$\psi_j:\M_{n_j} (K[x^{m_j},x^{-m_j}])(\delta'_{j1}, \ldots, \delta'_{jn_j})\cong_{\gr} \M_{n_j} (K[x^{m_j},x^{-m_j}])(s_{j0}(0), \ldots, s_{j(m_j-1)}(m_j-1)).$ If $\phi$ is $\bigoplus_{i=1}^k \phi_i\oplus \bigoplus_{j=1}^n\psi_j,$ then composing $R\cong_{\gr} L_K(E)$ and $ L_K(E)\cong_{\gr}M$ with $\phi$ produces a graded isomorphism of $R$ and a graded algebra as in condition (3).  

To show (3) $\Rightarrow$ (2), let $k'_i$ be a nonnegative integer and let $l_{i1}, \ldots, l_{ik'_i}, s_{j0}, \ldots, s_{j(m_j-1)}$ be positive integers such that $k_i=1+l_{i1}+\ldots+l_{ik'_i}$ and that $n_j=s_{j0}+\ldots +s_{j(m_j-1)}$ for each $i=1,\ldots, k$ and $j=1,\ldots, n.$ By Proposition \ref{matrix_rep_of_sinks}, there is a finite acyclic graph $E_i$ with a unique sink such that $L_K(E_i)\cong_{\gr} \M_{k_i} (K)(0, l_{i1}(1), \ldots, l_{ik'_i}(k'_i))$ for every  $i=1,\ldots k.$ By Proposition \ref{matrix_rep_of_comets}, there is a finite comet graph $F_j$ such that $L_K(F_j)\cong_{\gr}\M_{n_j} (K[x^{m_j},x^{-m_j}])(s_{j0}(0), \ldots, s_{j(m_j-1)}(m_j-1))$ for every $j=1,\ldots,n.$ Let $E$ be the disjoint union of graphs $E_i, i=1,\ldots, k$ and $F_j,$ $j=1,\ldots,n$ so that $L_K(E)$ is graded isomorphic to a graded algebra as in condition (3).

The equivalence of (3) and (4) holds by Lemma \ref{representatives} since representatives of the graded isomorphism class of a matricial algebra over $K$ are unique and representatives of the graded isomorphism class of a matricial algebra over $K[x^m , x^{-m}]$ are unique up to their order. 
\end{proof}

\subsection{Graded corners of Leavitt path algebras} If $R$ is a graded ring and $e$ a homogeneous idempotent, the ring $eRe$ is a {\em graded corner.} By \cite[Theorem 3.15]{Gene_Nam}, every corner of a Leavitt path algebra of a finite graph is isomorphic to another Leavitt path algebra. Using Proposition \ref{matrix_rep_of_sinks}, the example below shows that a {\em graded} corner of a Leavitt path algebra may not be {\em graded} isomorphic to another Leavitt path algebra.  

\begin{example}
Let $E$ be the graph below.  
\[\xymatrix{{\bullet}_u \ar[r]^e& {\bullet}_v \ar[r]^f& {\bullet}_w  } \]
If $\phi$ is the graded isomorphism $L_K(E)\cong_{\gr} \M_3(K)(0,1,2)$ described in section \ref{subsection_finite_no_exit}, then $\phi$ maps the graded idempotent $u+w$ to the graded idempotent $e=e_{11}+e_{33}$ where $e_{11}$ and $e_{33}$ are the standard matrix units. So, the graded corner $e\M_3(K)(0,1,2)e$ is graded isomorphic to the graded algebra $\M_2(K)(0, 2).$ By Proposition \ref{matrix_rep_of_sinks}, $\M_2(K)(0,2)$ is not graded isomorphic to any Leavitt path algebra. 
\label{example_on_corner_which_is_not_a_LPA}
\end{example}


\begin{thebibliography}{10}
\bibitem{LPA_book} G. Abrams, P. Ara, M. Siles Molina, Leavitt path algebras, Lecture Notes in Mathematics 2191, Springer, London, 2017. 

\bibitem{Gene_Nam} G. Abrams, T. G. Nam, \emph{Corners of Leavitt path algebras of finite graphs are Leavitt path algebras}, J. Algebra, {\bf 547} (2020), 494--518. 

\bibitem{Roozbeh_book} R. Hazrat, Graded rings and graded Grothendieck groups, London Math. Soc. Lecture Note Ser. 435, Cambridge Univ. Press, 2016.

\bibitem{Roozbeh_Lia_Ultramatricial}  R. Hazrat, L. Va\v s, \emph{$K$-theory classification of graded ultramatricial algebras with involution}, Forum Math., {\bf 31 (2)} (2019), 419--463.  

\bibitem{NvO_book}  C. N\u ast\u asescu, F. van Oystaeyen, Methods of graded rings, Lecture Notes in Mathematics 1836, Springer-Verlag, Berlin, 2004.

\bibitem{Lia_no-exit} L. Va\v s, \emph{Graded chain conditions and Leavitt path algebras of no-exit graphs}, J. Aust. Math. Soc. {\bf 105 (2)} (2018), 229 -- 256. 

\bibitem{Lia_realization} L. Va\v s, \emph{Simplicial and dimension groups with group action and their realization}, Forum Math. {\bf 34 (3)} (2022), 565--604. 

\end{thebibliography}
\end{document}